\newcommand{\TITLE}{Ramification in Division Fields and Sporadic Points on Modular Curves}
\newcommand{\TITLERUNNING}{}
\theoremstyle{plain}
\newtheorem{theorem}{Theorem}
\newtheorem{conjecture}[theorem]{Conjecture}
\newtheorem{lemma}[theorem]{Lemma}
\newtheorem{corollary}[theorem]{Corollary}
\theoremstyle{definition}
\newtheorem{definition}[theorem]{Definition}
\theoremstyle{remark}
\newtheorem{remark}[theorem]{Remark}
\newtheorem{example}[theorem]{Example}
\numberwithin{theorem}{section}
\newcommand{\tightoverset}[2]{%
  \mathop{#2}\limits^{\vbox to -.5ex{\kern-1.05ex\hbox{$#1$}\vss}}}
\newcommand{\gp}{{\mathfrak{p}}}
\newcommand{\gP}{{\mathfrak{P}}}
\def\Ocal{{\mathcal O}}
\def\pcal{{\mathcal p}}
\newcommand{\GG}{\mathbb{G}}
\newcommand{\PP}{\mathbb{P}}
\newcommand{\QQ}{\mathbb{Q}}
\newcommand{\ZZ}{\mathbb{Z}}
\newcommand{\ol}[1]{\overline{#1}}
\newcommand{\vphi}{\varphi}
\newcommand{\Aut}{\operatorname{Aut}}
\newcommand{\can}{\operatorname{can}}
\title[\TITLERUNNING]{\vspace*{-1.3cm} \TITLE}
\date{\today}
\author[Hanson Smith]{Hanson Smith}
\address{Department of Mathematics, University of Connecticut,
341 Mansfield Road U1009
Storrs, CT 06269-1009
USA}
\email{hanson.smith@uconn.edu}
\keywords{elliptic curves, torsion points, division fields, torsion fields, division polynomials, sporadic points, modular curves}
\subjclass[2020]{Primary 11G05, Secondary 14H52}
\begin{document}

\sloppy 

\maketitle

\begin{abstract}
Consider an elliptic curve $E$ over a number field $K$. Suppose that $E$ has supersingular reduction at some prime $\mathfrak{p}$ of $K$ lying above the rational prime $p$. We completely classify the valuations of the $p^n$-torsion points of $E$ by the valuation of a coefficient of the $p^{\text{th}}$ division polynomial. We apply this description to find the minimum necessary ramification at $\mathfrak{p}$ in order for $E$ to have a point of exact order $p^n$. 

Using this bound we show that sporadic points on the modular curve $X_1(p^n)$ cannot correspond to supersingular elliptic curves without a canonical subgroup. We generalize our methods to $X_1(N)$ with $N$ composite.
\end{abstract}




\section{Introduction and Results}\label{results}

The goal of this paper is to fully describe the valuations of the coordinates of $p^n$-torsion points of supersingular elliptic curves via the valuation of the coefficient of $x^{(p^2-p)/2}$ in the $p^{\text{th}}$ division polynomial (Theorem \ref{Thm: Main} and Corollary \ref{Cor: SlopesCounts}) and to use this description to preclude certain supersingular elliptic curves from corresponding to sporadic points on the modular curve $X_1(N)$ (Theorems \ref{Thm: Slopes} and \ref{modapp}). 


Before describing the results more precisely, we establish our setup along with some notation. These conventions will hold unless otherwise specified. Let $E$ be an elliptic curve over a number field $K$, with $P\in E(K)$ a point of exact order $p^n$. Let $\gp$ be a prime of $K$ lying over $p\in \ZZ$ at which $E$ has good supersingular reduction. We can adjoin all the $p^n$-torsion points of $E$ to $K$ to obtain the full $p^n$-th division field $K (E [p^n ] )$. 
Write $\gP$ for a prime of $K (E [p^n ] )$ lying over $\gp$. We will be studying the ramification of $\gp$, and to a lesser extent $\gP$, over $p$.
Let $e_{\gp}$ denote the ramification index of $\gp$ over $p$ and $e_{\gP}$ the ramification index of $\gP$ over $p$.

\begin{figure}[h]
\hspace*{-.5 in}\xymatrix{
K(E[p^n]) \ar@{-}[d] & \gP \ar@{-}[d]\\
K \ar@{-}[d] & \gp \ar@{-}[d]\\
\QQ & p}
\caption{}
\protect{\label{notation}}
\end{figure} 

The ring of integers of $K$ is $\Ocal_K$ and the local field obtained by completing $K$ at $\gp$ is denoted $K_\gp$. The valuation is denoted $v$, since $p$ will be clear from context, and is normalized so that $v(p)=1$. \textbf{Warning:} This normalization is distinct from much of the other literature in this area. We have chosen it, however, because we believe it makes the proofs more clean and straightforward. We let $\pi_\gp$ be a uniformizer, i.e., $\pi_{\gp}$ generates the maximal ideal of the valuation ring of $K_{\gp}$. The residue field of $K_{\gp}$ at $\gp$ is denoted $k_{\gp}$. 
As is common, the algebraic closure of a given field will be denoted with an overbar, e.g., $\overline{k_{\gp}}$. 

Section \ref{Sec: FormalGroup} states and establishes the main result, a complete classification of the valuations of $p^n$-torsion elements of the formal group of $E$. This classification involves the \emph{canonical subgroup}, a distinguished subgroup of $E[p]$ that lifts the kernel of Frobenius, as well as \textit{higher-level canonical subgroups}; see Definitions \ref{CanonicalSubgroup} and \ref{HigherCanGen}. 

We use this classification to obtain a lower bound for ramification above $p$ which in turn gives a lower bound for the degree of $K$:

\begin{theorem}\label{Thm: Slopes}
Minimal values for $e_{\gp}$ and $e_{\gP}$ can be determined from the valuation of a coefficient of the $p^{\text{th}}$ division polynomial. The minimal values are 
\begin{equation}\label{MinRams}
e_{\gP}\geq p^{2n}-p^{2n-2} \ \ \ \text{ and } \ \ \ e_{\gp}>\varphi(p^n)=p^n-p^{n-1}.
\end{equation} 
Further, if $E$ does not have a canonical subgroup at $\gp$, then 
\begin{equation}\label{StrongerBound}
\left(p^{2n}-p^{2n-2}\right)\mid e_{\gp}.
\end{equation} 
If $E$ has a canonical subgroup at $\gp$, then $e_{\gP}>p^{2n}-p^{2n-2}$.
\end{theorem}

Another consequence of our work is that if $E$ is defined over a number field with a supersingular prime below $\gp$ that is unramified over $p$, then $E$ cannot have a canonical subgroup at $\gp$. Hence Equation \eqref{StrongerBound} holds.

Theorem \ref{Thm: Slopes} and other consequences of Section \ref{Sec: FormalGroup} are the subject of Section \ref{RamInDiv}. In Section \ref{sporadicpoints}, we use ramification to show that on $X_1(p^n)$ a subset of the supersingular locus is disjoint from the sporadic locus: 

\begin{theorem}\label{primepower}
Let $L$ be a number field and $E/L$ be an elliptic curve that is supersingular at some prime $\pcal$ of $\Ocal_L$ above $p$. 
Suppose that $E$ does not have a canonical subgroup at $\pcal$, then $j(E)$ does not correspond to a sporadic point on $X_1 (p^n )$ for any $n>0$.
\end{theorem}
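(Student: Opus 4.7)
The plan is to argue by contradiction: suppose $j(E)$ corresponds to a sporadic point $x\in X_1(p^n)$ for some $n>0$, and derive a contradiction by showing $\deg(x)$ exceeds the minimum degree $D$ at which $X_1(p^n)$ has infinitely many closed points. Write $d=\deg(x)=[\QQ(x):\QQ]$, so $d<D$. The closed point $x$ is represented by a pair $(E',P')/\QQ(x)$ with $j(E')=j(E)$ and $P'\in E'(\QQ(x))$ of exact order $p^n$.

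I transfer the hypotheses as follows. Since $j(E)\in\Ocal_L$ reduces to a supersingular $j$-invariant at $\pcal$, so does $j(E')$ at every prime of $\QQ(x)$ above $\pcal\cap\QQ(j(E))$; hence $E'$ has (potentially) good supersingular reduction at a prime $\gp$ of $\QQ(x)$ over $p$. The ``no canonical subgroup'' condition depends only on the reduction of $j\bmod p$, so $(E',\gp)$ inherits it from $(E,\pcal)$. After arranging a good-reduction model (see the obstacle below), I apply Theorem~\ref{SlopesCor} to $(E',P')$ at $\gp$: the absence of a canonical subgroup gives $p^{2n}-p^{2n-2}\mid e_\gp$, hence $e_\gp\geq p^{2n}-p^{2n-2}$. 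Combined with $e_\gp\leq d$, this yields
\[
d \;\geq\; p^{2n}-p^{2n-2}.
\]

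To conclude, I bound $D$ from above. The forgetful morphism $X_1(p^n)\to X(1)\cong \PP^1_\QQ$ has degree $(p^{2n}-p^{2n-2})/2$ for $p^n\geq 3$ (half the index $[\operatorname{SL}_2(\ZZ):\Gamma_1(p^n)]$ since $-I$ acts trivially on the upper half-plane), so the $\QQ$-gonality of $X_1(p^n)$ is at most $(p^{2n}-p^{2n-2})/2$; by Hilbert irreducibility the fiber over a generic rational $j$-value is a single closed point of full degree, giving infinitely many closed points of that degree, so $D\leq (p^{2n}-p^{2n-2})/2$. Thus
\[
d \;\geq\; p^{2n}-p^{2n-2} \;>\; \frac{p^{2n}-p^{2n-2}}{2} \;\geq\; D,
\]
contradicting $d<D$. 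The edge case $p^n=2$ is immediate since $X_1(2)\cong \PP^1_\QQ$ has no sporadic points.

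The main obstacle is the ``arranging a good-reduction model'' step: Theorem~\ref{SlopesCor} requires good supersingular reduction, but a priori $E'$ has only potentially good reduction at $\gp$, and an extension of $\QQ(x)$ affording good reduction could introduce ramification that erodes the factor-of-two gap in the comparison above. Handling this cleanly likely proceeds by showing that the $\QQ(x)$-rationality of a $p^n$-torsion point on $E'$, together with the supersingular $j$-invariant (which rules out potential multiplicative reduction), forces $E'$ itself to have good reduction at $\gp$—additive reduction in residue characteristic $p$ leaves room for only a very restricted amount of $p$-power torsion in the component group of the N\'eron model, insufficient to accommodate a point of order $p^n$ for $n$ large, and the small $n$ cases either follow from explicit gonality data for $X_1(p^n)$ or are vacuous.
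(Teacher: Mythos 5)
Your overall strategy---contradiction via a ramification lower bound on $\deg(x)$ against a gonality upper bound on $\delta(X_1(p^n))$---is the same as the paper's. But there is a genuine gap, which you partially flag yourself, and your proposed patch for it does not work.

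The gap: your lower bound $d\geq p^{2n}-p^{2n-2}$ requires $E'/\QQ(x)$ to have \emph{good} supersingular reduction at $\gp$, so that Theorem~\ref{SlopesCor} applies directly. But $E'$ is determined by the moduli data of $x$ only up to the constraint $j(E')=j(E)$; it can be a twist with additive reduction at $\gp$. Resolving additive reduction may cost an extension of degree up to $24$ (degree $6$ for $p>3$, $12$ for $p=3$, $24$ for $p=2$; see \cite[Section~2]{minram}), which only gives $d\geq \frac{1}{24}(p^{2n}-p^{2n-2})$. Your upper bound $D\leq (p^{2n}-p^{2n-2})/2$ then produces no contradiction. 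Your suggested repair---that the component group of the N\'eron model cannot house a point of order $p^n$, forcing good reduction---is not correct: the point $P'$ need not map nontrivially into the component group. In residue characteristic $p$, a rational $p^n$-torsion point can lie entirely in the kernel of reduction (the formal group part of $E'^0(\Ocal_{\gp})$), and there is no N\'eron--Ogg--Shafarevich criterion for $p$-power torsion at a prime above $p$. So additive reduction is not excluded, and the factor-of-two margin is gone.

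What the paper does instead is replace your crude gonality bound $D\leq \deg\bigl(X_1(p^n)\to X(1)\bigr)=(p^{2n}-p^{2n-2})/2$ by the much sharper van Hoeij--Smith bound $\gamma_\QQ(X_1(N))\leq \tfrac{11N^2}{840}$ (Equation~\eqref{bound}). That bound is small enough to absorb the factor of $24$ (plus a further factor of $2$ from the Weber function $\mathfrak{h}$, which quotients by $\Aut(E)=\{\pm 1\}$ in the generic case), giving $[\QQ(j(E),\mathfrak{h}(P)):\QQ]\geq \tfrac{1}{48}(p^{2n}-p^{2n-2})>\tfrac{11p^{2n}}{840}$. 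The paper also treats $j=0$ and $j=1728$ separately, since $|\Aut(E)|>2$ there; for $p=2,j=1728$ and $p=3,j=0$ it must appeal to explicit least-degree results for CM points on $X_1(p^n)$ from \cite{SporadicCM,BourdonClark19,BourdonClark18}. Your proposal does not address these CM cases either.
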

Understanding ramification also provides a similar result for $X_1(N)$ with $N$ composite; this is the content of Theorem \ref{modapp}. After proving these theorems, we demonstrate via some examples how our methods can be generalized when one is interested in specific modular curves.

Previous work has addressed the valuations of the $p^n$-torsion points of an elliptic curve that is supersingular or potentially supersingular at a prime above $p$; see Lemma 4.7 of \cite{BoxallGrant} and Lemma 5.3\footnote{This lemma contains an error which Theorem \ref{Thm: Slopes} corrects. Fortunately, this error does not affect the main results of \cite{ramsup}.} of \cite{ramsup}. However, these investigations do not focus on the importance of the canonical subgroup; in fact, the term `canonical subgroup' is not mentioned in either paper. Moreover, to our knowledge all previous literature falls short of the complete classification of the valuations of these torsion points that we are able to present here. 

Another feature of our description is it presents a natural and concrete example of the phenomenon of higher-level canonical subgroups described in more general and abstract contexts in \cite{Gouvea}, \cite{Buzzard}, \cite{ConradCanonical1}, and \cite{ConradCanonical}. See Definition \ref{HigherCanGen}.


\newpage

\tableofcontents

\section{Previous Work and Motivation}\label{previouswork}


Previous work in the area we consider has a variety of different thrusts. We quickly survey three distinct lines of research where our classification of the valuations of the coordinates of $p^n$-torsion points of supersingular elliptic curves has consequences. 

Division fields of elliptic curves have a strong analogy with cyclotomic fields, the $\GG_m$ division fields. Motivated by this analogy, one can work to describe splitting, ramification, and inertia explicitly in division fields of elliptic curves. To this end, Adelmann's book \cite{Decompbook} provides a nice introduction culminating in criteria describing the decomposition of unramified primes in various division fields. An abbreviated list of other relevant work in this area includes \cite{pdiff}, \cite{CaliKraus}, \cite{split}, \cite{Kida}, \cite{minram}, \cite{abdiv}, \cite{splittor}, \cite{FreitasKraus}, and \cite{NonMonoECs}.


Given an arbitrary elliptic curve $E$ over some number field $K$, one wishes to have a description of the torsion subgroup $E(K)_\text{tors}$. This is another motivation for the study at hand. With \cite{Mazur'sThm1} and \cite{Mazur'sThm2} Mazur described $E(\QQ)_\text{tors}$. When $[K:\QQ]=2$, Kamienny, Kenku, and Momose (culminating with \cite{QuadK} and \cite{QuadKM}) describe $E(K)_\text{tors}$.

The proofs of these results rely on carefully analyzing modular curves. In the cases, there are infinite families of elliptic curves having each of the possible torsion subgroups. This means there are infinitely many points on the corresponding modular curve of the given degree. When $[K:\QQ]\geq 3$ this is no longer the case. That is, writing $d=[K:\QQ]$, there are modular curves with only finitely many degree $d$ points. For example, let $E$ be the elliptic curve with Cremona label 162b1 (and $j$-invariant $-1 \cdot 2^{-3} \cdot 3^{2} \cdot 5^{6}$). Najman \cite{Z/21Z} has shown
\[E\left(\QQ(\zeta_9)^+\right)\cong \ZZ/21\ZZ.\]
However, it is known that only finitely many isomorphism classes of elliptic curves can have this torsion subgroup over a cubic field. See Jeon, Kim, and Schweizer's paper \cite{infinitecubic} for a list of the possible torsion subgroups that can occur for infinitely many elliptic curves over a cubic field. The point on the modular curve $X_1(21)$ corresponding to $E$ is an example of a \emph{sporadic point}\footnote{Sporadic points are also called \emph{unexpected points} or \emph{exceptional points} in the literature.}. Briefly, if $D$ is the minimal degree such that a curve $X$ has infinitely many points of degree $D$, then a sporadic point is any point with degree less than $D$. 

Recently, Derickx, Etropolski, van Hoeij, Morrow, and Zureick-Brown have shown that $X_1(21)$ is the only modular curve with a cubic sporadic point \cite{CubicTorsion}. In fact, they show that the only cubic sporadic point is the one corresponding to Najman's curve. Combined with the work of Jeon, Kim, and Schweizer \cite{infinitecubic}, this classifies the possible groups $E(K)_{\text{tors}}$ when $[K:\QQ]=3$.

Thus, in order to understand $E(K)_\text{tors}$ for more exotic $K$, it seems likely we will need to have a better understanding of sporadic points on modular curves. To this end, Bourdon, Ejder, Liu, Odumodu, and Viray \cite{sporadicbound} have made an insightful investigation of sporadic $j$-invariants ($j$-invariants corresponding to a sporadic point on a modular curve $X_1(N)$ for some $N>0$). Among the many results in \cite{sporadicbound}, they have shown that, assuming Serre's uniformity conjecture, the number of sporadic $j$-invariants in a given number field is finite. The interested reader should also consult the recent papers \cite{BGRW} and \cite{BourdonNajman}.

Finally, one can ask about bounds on the size of the torsion subgroup in terms of the degree $d$. \cite{Merel} showed that there is a uniform bound for $|E(K)_\text{tors}|$ that is independent of the curve $E/K$ and depends only on $d$. Further, Merel found that if $p$ divides $|E(K)_\text{tors}|$, then
\[p\leq d^{3d^2}.\]
Oesterl\'e later improved the bound to 
\[p\leq \left(1+3^\frac{d}{2}\right)^2;\]
however, this work was unpublished. Thanks to the work of Derickx, a proof can now be found in \cite[Appendix A]{Oestbound}. Parent \cite{Parent} showed that if $E$ has a point of order $p^n$, then 
\[p^n\leq 129(5^d-1)(3d)^6.\]
It is believed that the best possible bound on $|E(K)_\text{tors}|$ should be a polynomial in $d$. Indeed, Lozano-Robledo has conjectured \cite{LRUniform}:
\begin{conjecture}\label{torsbound}
There is a constant $C$ 
such that if $E$ has a point of exact order $p^n$ over a number field of degree $d$, then 
\[\varphi(p^n)\leq C\cdot d.\]
\end{conjecture}
In \cite{LRUniform}, Lozano-Robledo makes significant strides toward this conjecture by considering ramification in the fields of definition of $p^n$-torsion points. 
In the case where $E$ has potential supersingular reduction, the results of \cite{formalgroups} and \cite{ramsup} show Conjecture \ref{torsbound} with $C=24$. Theorem \ref{Thm: Slopes} shows that when $E$ is supersingular with no canonical subgroup, 
then $\vphi(p^n)\cdot (p^n+p^{n-1})=p^{2n}-p^{2n-2}\leq d$ and Theorem \ref{Thm: Main} gives minimal ramification in all other cases of supersingular reduction. 

\section*{Acknowledgments}

The author would like to thank \'{A}lvaro Lozano-Robledo for the helpful correspondence and discussions, Katherine Stange for the advice and direction, and Alex Braat for discovering a false claim in an earlier version of this paper. The author is also indebted to David Grant for the insightful conversations and suggestions.
This research was partially supported by NSF-CAREER CNS-1652238 under the supervision of PI Dr. Katherine E. Stange.

\section{Background on Division Polynomials}\label{divpolybackground}

Keeping the same assumptions on $E$ as in Section \ref{results}, we write a Weierstrass equation

\begin{equation*}
E:y^2+a_1xy+a_3y=x^3+a_2x^2+a_4x+a_6.
\end{equation*}

One can define division polynomials, $\Psi_n\in \ZZ[a_1,a_2,a_3,a_4,a_6,x,y]$, recursively starting with
\[\Psi_1=1, \ \ \ \ \Psi_2=2y+a_1x+a_3, \ \ \ \ \Psi_3=3x^4+b_2x^3+3b_4x^2+3b_6x+b_8,\]
\[\Psi_4=\Psi_2\left(2x^6+b_2x^5+5b_4x^4+10b_6x^3+10b_8x^2+\left(b_2b_8-b_4b_6\right)x+\left(b_4b_8-b_6^2\right)\right),\]
and using the formulas 
\[\Psi_{2m+1} =  \Psi_{m+2}\Psi_m^3-\Psi_{m-1}\Psi_{m+1}^3 \  \text{ for} \ \ m\geq 2  \ \text{ and}\]
\[\Psi_{2m}\Psi_2 = \Psi_{m-1}^2\Psi_m\Psi_{m+2}-\Psi_{m-2}\Psi_m\Psi_{m+1}^2 \ \text{ for } \ m\geq 3.\]

For a reference see \cite[Exercise 3.7]{AEC}. If $m$ is odd, we can write

\begin{equation*}\label{proddef}
\dfrac{1}{m}\Psi_m=\prod\limits_P(x-x(P)),
\end{equation*}
where the product is over the non-trivial $m$-torsion points with distinct $x$-coordinates. If $m$ is even and not 2, we have
\begin{equation*}\label{proddefeven}
\dfrac{2}{m\Psi_2}\Psi_m=\prod\limits_P(x-x(P)),
\end{equation*}
where now the product is over the non-trivial $m$-torsion points with distinct $x$-coordinates that are not 2-torsion points.
Since $E[m]\cong \ZZ/m\ZZ\times \ZZ/m\ZZ$, this definition makes it clear that when $m$ is odd $\Psi_m$ has degree $\frac{m^2-1}{2}$. The even division polynomials also have degree $\frac{m^2-1}{2}$, so long as we think of $y$ as having degree $\frac{3}{2}$ in $x$. 



It is convenient to have the valuations of the constant coefficients of various division polynomials. 

\begin{lemma}\label{constantval} 
Keep the same assumptions as in Section \ref{results}. If $p$ is an odd prime, write $c_0$ for the constant coefficient of $\Psi_{p^n}$. If $p=2$, write $c_0$ for the constant coefficient of $\Psi_2\Psi_{2^n}$. Then $v (c_0 )=0$.
\end{lemma}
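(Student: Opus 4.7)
The plan is to reduce $\Psi_{p^n}$ (respectively $\Psi_2\Psi_{2^n}$ when $p=2$) modulo $\gp$ and show that the reduction is a nonzero constant polynomial in $x$; this immediately forces its constant term $c_0$ to be a $\gp$-adic unit.

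For $p$ odd, $\Psi_{p^n}\in\Ocal_K[x]$ reduces to the $p^n$-division polynomial $\tilde{\Psi}_{p^n}$ of the supersingular curve $\tilde{E}/k_\gp$. The roots of $\tilde{\Psi}_{p^n}$ in $\overline{k_\gp}$ are precisely the $x$-coordinates of the nontrivial $p^n$-torsion points of $\tilde{E}$. Supersingularity gives $\tilde{E}\bigl[p^n\bigr]\bigl(\overline{k_\gp}\bigr)=0$, so $\tilde{\Psi}_{p^n}$ has no roots at all; the nominal degree $(p^{2n}-1)/2$ collapses because the leading coefficient $p^n$ vanishes mod $p$. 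Moreover $\tilde{\Psi}_{p^n}\not\equiv 0$, since otherwise $[p^n]$ would be the zero map on $\tilde{E}$. Hence $\tilde{\Psi}_{p^n}\in k_\gp^\times$, and this nonzero constant is precisely the reduction of $c_0$, so $v(c_0)=0$.

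For $p=2$ the same strategy applies, but some bookkeeping is required because $\Psi_{2^n}$ carries $y$-dependence through the factor $\Psi_2=2y+a_1x+a_3$. Writing $\Psi_{2^n}=\Psi_2\cdot g_{2^n}(x)$ with $g_{2^n}\in\ZZ[a_1,\ldots,a_6][x]$, we have $\Psi_2\Psi_{2^n}=\Psi_2^2\cdot g_{2^n}(x)$, and using the Weierstrass relation to eliminate $y^2$ yields $\Psi_2^2=4x^3+b_2x^2+2b_4x+b_6$, a polynomial in $x$ alone with constant term $b_6=a_3^2+4a_6$. Smoothness of $\tilde{E}$ forces $\tilde{a}_3\neq 0$, so $\tilde{b}_6=\tilde{a}_3^2\neq 0$. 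As in the odd case, the absence of nontrivial $2^n$-torsion on $\tilde{E}$ implies $\tilde{g}_{2^n}$ has no roots in $\overline{k_\gp}$, while $\tilde{g}_{2^n}\not\equiv 0$ since $\tilde{\Psi}_2\cdot\tilde{g}_{2^n}=\tilde{\Psi}_{2^n}$ is nonzero; hence $\tilde{g}_{2^n}$ is a nonzero constant. Multiplying, $\tilde{c}_0=\tilde{b}_6\cdot\tilde{g}_{2^n}(0)\neq 0$, giving $v(c_0)=0$. The main technical point throughout is that supersingularity collapses the degree of the reduced polynomial all the way to $0$, tying the arithmetic statement about $c_0$ directly to the geometric fact that $\tilde{E}\bigl[p^n\bigr]$ is trivial in characteristic $p$.
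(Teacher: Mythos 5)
Your proof is correct and rests on the same core observation as the paper's: supersingular reduction at $\gp$ means $\tilde{E}\bigl[p^n\bigr]\bigl(\overline{k_\gp}\bigr)$ is trivial, so the reduced division polynomial has no roots and hence a unit constant term. The paper instead specializes at $x=0$ and argues the resulting point cannot be $p^n$-torsion, but this is the same argument viewed pointwise; your version (showing the whole reduction is a nonzero constant, and spelling out the $p=2$ bookkeeping via $\Psi_2^2$ and $b_6$) is if anything a bit more explicit than the paper's one-line treatment of the $p=2$ case.
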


\begin{proof}
Assume $p$ is odd and choose some $Q\in E (\ol{k_\gp} )$ with $x(Q)=0$. Note that $\alpha$ is a root of $\Psi_{p^n}$ if and only if points on $E$ with $x$-coordinates equal to $\alpha$ are $p^n$-torsion points. Thus $\Psi_{p^n}(0)=c_0\equiv 0$ modulo $\gp$ if and only if $Q$ has order dividing $p^n$. Since $E$ has supersingular reduction at $\gp$, all of the $p^n$-torsion points are in the kernel of reduction. Thus $Q$ does not have order dividing $p^n$ and $v_\gp(c_0)=0$. For $p=2$, repeat the above, but replace $\Psi_{p^n}$ with $\Psi_2\Psi_{2^n}$.
\end{proof}

Note that since $\Psi_{p^n}/\Psi_{p^{n-1}}$ has leading coefficient $p$, the valuations of the distinct $x$-coordinates of points of exact order $p^n$ sum to negative one. That is, letting $E [=\hspace{-.09 cm }p^n ]$ denote the points of exact order $p^n$,
\begin{equation}\label{sumto1}
\sum_{P\in E\left[=p^n\right]/\pm}v\left(x(P)\right)=-1.
\end{equation}
The exception to Equation \eqref{sumto1} is when $p^n=2$, then the sum of the valuations of the roots of $\Psi_2^2$ is
\begin{equation}\label{sumto1at2}
\sum_{P\in E\left[=2\right]}v\left(x(P)\right)=-2.
\end{equation}


\begin{remark}\label{ReciprocalEisenstein}
Without referencing the formal group, we can already use Lemma \ref{constantval} to obtain partial results. Let $E$ be an elliptic curve over a number field $K$ with an unramified prime $\gp$ over $p$ at which $E$ is supersingular. To employ a ``reciprocal Eisenstein" trick, consider the polynomial 
\[x^{-\frac{p^2-1}{2}}\Psi_{p}\left(x^{-1}\right)\in K_{\gp}[x].\]
Since $\gp$ is unramified and $\Psi_p$ has leading coefficient $p$, one can show this polynomial is Eisenstein. Hence, with some additional consideration of the $y$-coordinate, adjoining a point of exact order $p$ to $K_{\gp}$ is a totally ramified extension of degree $p^{2}-1$. For $p^n$-torsion, we can employ this same trick with 
\[x^{-\frac{p^{2n}-p^{2n-2}}{2}}\frac{\Psi_{p^n}}{\Psi_{p^{n-1}}}\left(x^{-1}\right).\]
Though this result is significantly weaker than Theorem \ref{Thm: Main}, it is actually all that is necessary to prove Theorem \ref{modapp}.
\end{remark}

\section{Valuations of the $p^n$-torsion Points of the Formal Group}\label{Sec: FormalGroup}

In this section we will fully describe the valuations of the $p^n$-torsion points of an elliptic curve that is supersingular at a prime above $p$. All the facts used here regarding formal groups can be found in \cite[Chapter IV]{AEC}. For the case when the ramification index above $p$ is 1, see Serre's landmark paper \cite{Serre}. 

\subsection{Setup and Basics}

Unless otherwise noted, in this section $P\in E(K)$ will be a point of exact order $p^n$ with $n\geq 1$ on an elliptic curve $E/K$ that is supersingular at some prime $\gp\subset \Ocal_K$ lying above the odd prime $p$; for the caveat with $p=2$, see Remark \ref{p=2caveat}. As before, $v$ is the valuation associated to $\gp$, normalized so that $v(p)=1$. Recall, $E [=\hspace{-.09 cm }p^n ]$ denotes the set of points of exact order $p^n$.

Let $F_E\in K_{\gp}\llbracket S, T \rrbracket$ denote the formal group law of $E$ over $K_{\gp}$. Let $\pi_{\gp}$ be a uniformizer at $\gp$; the set of elements of the ideal $(\pi_{\gp})$ becomes a group under 
$F_E$ and will be denoted $\hat{E}$. If $\beta \in \hat{E}$, the map $\beta\mapsto (x (\beta ),y (\beta ))$ yields an injection from $\hat{E}$ into $E (K_{\gp} )$. Conversely, if $(x,y)\in E (K_{\gp} )$ is in the kernel of reduction, then $(x,y)\mapsto \frac{-x}{y}\in \hat{E}$ yields an inverse, so that $\hat{E}$ is isomorphic to the kernel of reduction modulo $\pi_{\gp}$. 
If $Q$ is in the kernel of reduction, write $\hat{Q}$ for the image of $Q$ in $\hat{E}$, i.e., $\frac{-x(Q)}{y(Q)}.$ We will often just use $+$ to denote addition in $\hat{E}$. The reader may find it useful to recall the following identity for moving between valuations of roots of division polynomials and of elements of the formal group:
\begin{equation*}
v(x(Q))=-2v\left(\hat{Q}\right).
\end{equation*}

Consider $E [p^n ]$. Since $E$ is supersingular, $E [p^n ]\cong \hat{E} [p^n ]$. We will conflate these groups when context makes our meaning clear. Multiplication by an integer relatively prime to $p$ is an automorphism of $\hat{E}[p]$. Choosing a basis $ \{B_1,B_2 \}$, the action of $ (\ZZ/p\ZZ )^*$ has $p+1$ orbits:
\[\left<B_1\right>, \left<B_1+B_2\right>, \dots, \left<B_1+[p-1] B_2\right>, \left<B_2\right>.\]
We label the orbits $C_0$ through $C_p$. 
Excluding the identity, we notice $v (\hat{Q} )$ is the same for all $\hat{Q}$ in a given orbit.

We have two possibilities:  In one case, 
 $v (\hat{Q} )$ is the same for all $\hat{Q}\in \hat{E}[=\hspace{-.09 cm }p]$. In the other case, 
 $v (\hat{R} )>v (\hat{Q} )$ for all $\hat{R}$ in some orbit $C_i$ and all $\hat{Q}$ in some other orbit $C_j$. 
We notice $v (\hat{S} )=v (\hat{Q} )$ if $\hat{S}$ is in any orbit $C_k$ with $k\neq i$.  This is because $\hat{S}=[l]\hat{R}+[m]\hat{Q}$ for some $l,m\in\ZZ/p\ZZ$ with $m\neq 0$, and $F_E(S,T)=S+T +(\text{terms of degree }\geq 2).$
\begin{definition}\label{CanonicalSubgroup}
The orbit $C_i$ along with the identity is called the \emph{canonical subgroup}\footnote{The nomenclature comes from the fact that such a subgroup is a canonical lifting of the kernel of Frobenius.}. That is, if there exists $\hat{R}\in \hat{E}[=\hspace{-.09 cm}p]$ such that $v (\hat{R} )>v (\hat{Q} )$ for some $\hat{Q}\in \hat{E}[=\hspace{-.09 cm }p]$, then the orbit $ \langle\hat{R} \rangle$ is the canonical subgroup and denoted $C_{\can}$.
\end{definition}
Imprecisely speaking, we could say elliptic curves with a canonical subgroup are less supersingular since, like ordinary elliptic curves, they also have a distinguished subgroup of order $p$ that is a canonical lift of the kernel of Frobenius. 
For more general discussions of canonical subgroups, the reader should consult \cite{Lubin} and \cite{Coleman}. 

We are primarily concerned with the fact that elements in the canonical subgroup have larger valuations (equivalently, the fact that $x$-coordinates of points in $C_{\can}$ have smaller valuations). In this section, we will explicitly show that there can be a similar phenomenon at higher levels and describe exactly when and to what extent it happens. Specifically, in certain fibers $[p]^{-1}\hat{Q}$ with $\hat{Q}\in \hat{E} [=\hspace{-.09 cm }p^{n-1} ]$ there may be a set of $p$ elements with larger valuations. For an example see Figure \ref{FiberLevel9}. 
If $\hat{W}$ is one such element, the others all have the form $\hat{W}+\hat{R}$ with $\hat{R}\in C_{\can}$. Moreover, the subgroup of $\hat{E} [p^n ]$ generated by any such $\hat{W}$ is exactly the set of elements of order dividing $p^n$ with larger valuation. We call this the \textit{level-$n$ canonical subgroup}; see Definition \ref{HigherCanGen}. 

\vspace{.1 in}

For $p>2$, define 
\begin{equation}\label{eq: formu}
\mu:=v\left(c_{\frac{p^2-p}{2}} \right), \ \ \text{ where } \ \ \Psi_p(x)=px^{p^2-1}+c_{p^2-2}x^{p^2-2}+\cdots+c_1x+c_0.
\end{equation}
For $p=2$, define $\mu:=v (a_1^2+4a_2 )/2$. 
With $p=2$, the following discussion needs to be augmented slightly since $x$-coordinates of points in $E[=\hspace{-.09 cm }2]$ are distinct; see Remark \ref{p=2caveat}. Returning to the discussion at hand, label the distinct $x$-coordinates of points in $E[=\hspace{-.09 cm}p]$ as $x_1,\dots, x_{\frac{p^2-1}{2}}$. Notice 
\begin{equation}\label{valofcancoeff}
c_{\frac{p^2-p}{2}}=p\sum_{1\leq i_1<i_2<\cdots<i_{\frac{p-1}{2}}\leq \frac{p^2-1}{2}} x_{i_1}\cdots x_{i_{\frac{p-1}{2}}}.
\end{equation}

If there is a canonical subgroup, there is a distinct summand on the right-hand side of \eqref{valofcancoeff} with smallest valuation. (Recall that we multiply valuations by $-2$ to move between $\hat{E}$ and $x$-coordinates of points on $E$.) Indeed the summand with smallest valuation is the product of the $\frac{p-1}{2}$ distinct $x$-coordinates of points in $C_{\can}$.
Hence, if we have a canonical subgroup, 
$-(1-\mu)$ is the sum of the valuations of all the distinct $x$-coordinates of points that are in $C_{\can}$. We see the valuation of the $x$-coordinate of a point in $C_{\can}$ is 
$\frac{-2(1-\mu)}{p-1}$. Hence, the valuation of the $x$-coordinate of a point that is not in $C_{\can}$ is $\frac{-2\mu}{p^2-p}$. 
Note, $\mu$ is defined to correspond to valuations in $\hat{E}$. Thus, if $P\in C_{\can}$, then $v (\hat{P} )=\frac{1-\mu}{p-1}$, and if $P\notin C_{\can}$, then $v (\hat{P} )=\frac{\mu}{p^2-p}$. 

A classical criterion of Deuring \cite{DeuringCriterion} states that $E: y^2 = f(x)$ is supersingular at $\gp$ if and only if the coefficient of $x^{p-1}$ in $f(x)^{\frac{p-1}{2}}$ vanishes modulo $\gp$. One may be curious how the coefficient $c_{\frac{p^2-p}{2}}$ is related to Deuring's coefficient. Thanks to Debry \cite{Debry}, we know that they are in fact equal. Thus, for such $E$, we could just as well define $\mu$ to be the valuation of Deuring's coefficient. 

Many authors 
prefer to work directly with the multiplication-by-$p$ power series $[p]T$ as opposed to the division polynomial. 
In this case, the valuation of the coefficient of $T^p$ in $[p]T$ indicates whether or not there is a canonical subgroup. Following \cite{BoxallGrant}, we have called this valuation $\mu$. The coefficient of $T^p$ is the sum of distinct products of $p^2-p$ elements of $\hat{E}[=\hspace{-.09 cm }p]$. If there is a canonical subgroup, $\mu$ is the sum of the valuations of the $p^2-p$ elements that are not in $C_{\can}$. As such, $\mu<\frac{p}{p+1}$. Conversely, if there is not a canonical subgroup, $\mu\geq \frac{p}{p+1}$. Since all elements of $\hat{E}[=\hspace{-.09 cm }p]$ have the same valuation in this case, Equation \eqref{valofcancoeff} shows the valuation of each element must be $\frac{1}{p^2-1}$. 

Before we state the main theorem of this section, it may be useful to actually ``see" a canonical subgroup. To this end, we revisit a nice example that can be found in Lozano-Robledo's papers \cite{formalgroups} and \cite{ramsup}.
\begin{example}\label{121c2}
Let $E/\QQ$ be the elliptic curve with Cremona label 121c2. The $j$-invariant is $-11\cdot 131^3$ and the global minimal model over $\QQ$ is
\begin{equation*}
E: \ \ y^2+xy=x^3+x^2-3632+82757.
\end{equation*}
At $p=11$ the curve $E$ has bad additive reduction. Over $\QQ (\sqrt[3]{11} )$ the bad additive reduction resolves to good supersingular reduction and the curve has global minimal model 
\begin{equation*}
E: \ \ y^2+\sqrt[3]{11}xy=x^3+\sqrt[3]{11^2}x^2+3\sqrt[3]{11}+2.
\end{equation*}

Using SageMath \cite{Sage}, one can compute the factorization
\begin{align*}
\Psi_{11} &=11 x^{60} + \cdots 
+ 195530917 \sqrt[3]{11} x^{55} +\cdots - 7312712 \sqrt[3]{11} x - 303271 \\
&=11\left(x^{5} + \sqrt[3]{11^2} x^{4} + 3 \sqrt[3]{11} x^{3} + 3 x^{2} -
\frac{1}{\sqrt[3]{11^2}}\right) \left(x^{55} + \cdots + \frac{303271}{\sqrt[3]{11}}\right).
\end{align*}

The valuation of the coefficient of $x^{55}$ is $\frac{1}{3}$, so $\mu=\frac{1}{3}$. 
From the factorization above, we see that the sum of the valuations of the five distinct $x$-coordinates of 11-torsion points in the canonical subgroup is $-\frac{2}{3}=-(1-\mu)$. Figures \ref{PsiNewtonPolygon} and \ref{FormalNewtonPolygon} illustrate the various Newton polygons associated to the 11-torsion on this elliptic curve. The polygons are constructed by taking the lower convex hull of the points $ (i,v (c_i ) )$, where $c_i$ is the coefficient of $x^i$ or $T^i$. 
We can visualize the canonical subgroup in its various guises as the side of the Newton polygon with steepest slope. Figure \ref{PsiNewtonPolygon} is labeled specifically referring to our $\Psi_{11}$, while Figure \ref{FormalNewtonPolygon} is labeled generally to help clarify the larger discussion.

\begin{figure}[h!]
\centering
\begin{tikzpicture}[scale = .9]
\begin{axis}[ 
	xmin=0, xmax=61.9, 
	ymin=0, ymax=1.1,
	xtick={60}, ytick={1},
	xticklabels={},
	yticklabels={1},
	major tick length={0},
	grid=major, 
	line width=1pt, 
	axis lines=center
	] 
	\addplot [very thick, domain=55:60] {.33333+.133333333*(x-55)};
	\addplot [very thick, domain=0:55] {x*(.00606061)}; 
\end{axis}
\node [left] at (6.09,2.12) {$\left(55,\mu=\frac{1}{3}\right)$};
\node at (6.09,1.73) [circle,fill,inner sep=1.5pt]{};
\node [below] at (6.65,5.25) [circle,fill,inner sep=1.5pt]{};
\node at (0,0) [circle,fill,inner sep=1.5pt]{};
\node [left] at (0,0) {$\left(0,0\right)$};
\node [below] at (6.66,0) {$60$};
\end{tikzpicture}
\caption{The Newton polygon for the polynomial $\Psi_{11}=11\hspace{-.3 cm}\prod\limits_{P\in E[=11]/\pm}\hspace{-.1 cm}(x-x(P))$}
\protect{\label{PsiNewtonPolygon}}
\end{figure}
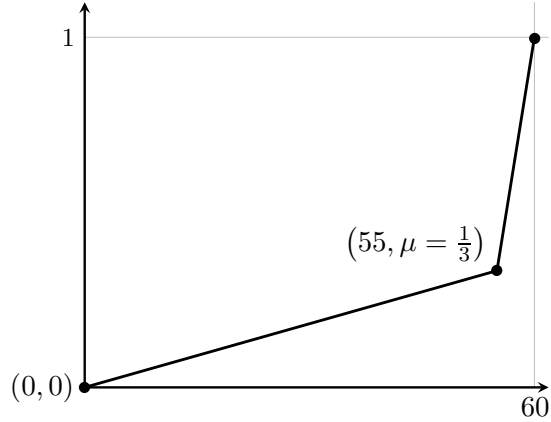


\begin{figure}[h!]
\centering
\begin{tikzpicture}[scale = 1]
\begin{axis}[ 
	xmin=0, xmax=124.9, 
	ymin=0, ymax=1.1,
	xtick={0,1,...,120}, ytick={0,0.1,...,1.1}, 
	xticklabels={0, ,, , , , , ,
, ,
 , ,},
	yticklabels={0, , , , 
	, , , , , ,1},
	major tick length={0},
	line width=1pt, 
	axis lines=center
	] 
	\addplot [very thick, domain=0:10] {1-x*.066666};  
	\addplot [very thick, domain=10:120] {.333333-.0030303*(x-10)};
\end{axis}
\node [right] at (.72,1.97) {$\left(p-1,\mu\right)$};
\node at (.57,1.75) [circle,fill,inner sep=1.5pt]{};
\node at (0,0) [circle,fill,inner sep=1.5pt]{};
\node [left] at (0,0) {$\left(0,0\right)$};
\node [below] at (6.66,0) {$p^2-1$};
\node at (0,5.17) [circle,fill,inner sep=1.5pt]{};
\node at (6.58,0) [circle,fill,inner sep=1.5pt]{};
\end{tikzpicture}
\caption{The Newton polygon for the polynomial\hspace{-.2 cm} $\prod\limits_{\hat{P}\in \hat{E}[=\hspace{-.02 cm}11]}\hspace{-.1 cm}(T-\hat{P})$ }
\protect{\label{FormalNewtonPolygon}}
\end{figure}
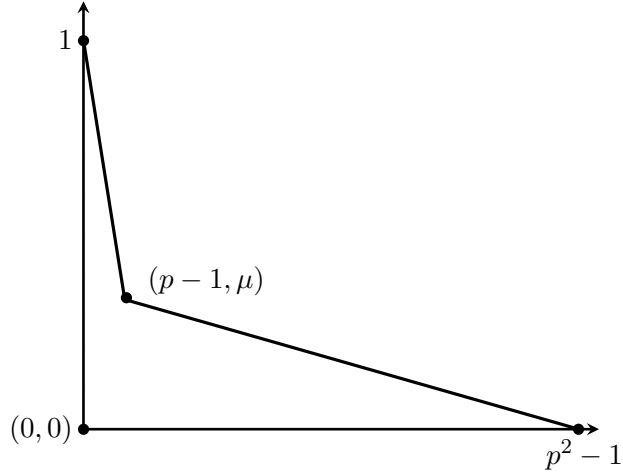




Of particular import to our work in Section \ref{sporadicpoints} is that there is a degree 10 extension of $\QQ (\sqrt[3]{11} )$ over which $E$ has a point of exact order 11. If we did not have a canonical subgroup at 11, an extension of at least degree 40 (degree $120=11^2-1$ over $\QQ$) would be needed to ensure the requisite ramification for an 11-torsion point.

\end{example}


\subsection{Main Theorem}

Echoing \cite{ConradCanonical1} and \cite{ConradCanonical}, we make the following definition. This definition helps make sense of the statement of Theorem \ref{Thm: Main} and will be justified in the proof of the theorem.

\begin{definition}\label{HigherCanGen}
Suppose $\mu<\frac{1}{p^{n-2}(p+1)}$ and $\hat{W}\in \hat{E} [=\hspace{-.09 cm }p^n ]$ is an element with the largest possible valuation, i.e., $v(\hat{W})=\frac{1-p^{n-1}\mu}{p^{n-1}(p-1)}$. As a porism of Theorem \ref{Thm: Main}, the subgroup of $\hat{E} [p^n ]$ generated by $\hat{W}$ is exactly the set of elements of $\hat{E} [p^n ]$ with large valuations. In other words, $\langle\hat{W}\rangle$ is $C_{\can}$ and every element above $C_{\can}$ with valuation $\frac{1-p^{k}\mu}{p^{k}(p-1)}$, where $1\leq k\leq n-1$. We define the \textit{level-$n$ canonical subgroup} to be $\langle\hat{W}\rangle$. We see $\langle\hat{W}\rangle=\langle\hat{R}\rangle$ for any $\hat{R}\in \hat{E} [=\hspace{-.09 cm }p^n ]$ with $v(\hat{R})=\frac{1-p^{n-1}\mu}{p^{n-1}(p-1)}$. Such a subgroup exists if and only if $\mu<\frac{1}{p^{n-2}(p+1)}$.
\end{definition}

With a way to visualize things and some terminology, we are ready for the main theorem of the section. Some of the expressions in the theorem below are not fully simplified. This is deliberate and intended to make it easier for the reader to see how the values are obtained.


 
\begin{theorem}\label{Thm: Main}
Keep the same notation as above and recall $P\in E [=\hspace{-.09 cm}p^n ]$. If 
$\mu \geq \frac{p}{p+1}$, then there is no canonical subgroup at $\gp$ and 
\begin{equation}\label{NoCanThm}
v\left(\hat{P}\right)= \frac{1}{p^{2n}-p^{2n-2}}.
\end{equation}
Otherwise, 
$0<\mu <\frac{p}{p+1}$ and there is a (level-1) canonical subgroup at $\gp$. Suppose this is the case and let $s$ be the smallest non-negative integer such that $\mu\geq\frac{1}{p^s(p+1)}$. The last level for which there is a canonical subgroup is $s+1$. If $ [p^{n-1} ]P\notin C_{\can}$, then 
\begin{equation}\label{CanNotCanThm}
v\left(\hat{P}\right)=\frac{\mu}{p^{2n-2}\left(p^{2}-p\right)}.
\end{equation}
If $ [p^{n-1} ]P\in C_{\can}$, then there are a number of cases: First, if $n=1$,
\begin{equation}\label{CanThmn=1}
v\left(\hat{P}\right)=\frac{1-\mu}{p-1}.
\end{equation} 
For $n>1$, if there is a smallest non-negative integer $m$ such that $v ( [p^m ]\hat{P} )=\frac{\mu}{p^2-p}$, then
\begin{equation}\label{HigherNoCanVal}
v\left(\hat{P}\right)=\frac{\mu}{p^{2m}(p^2-p)}.
\end{equation}
If no such integer exists, then two cases remain. If $n \geq s+2$, then there is no level-$n$ canonical subgroup and
\begin{equation}\label{HigherCanValSecond}
v\left(\hat{P}\right)=\frac{1-p^{s}\mu}{p^{2(n-(s+1))}p^s(p-1)}=\frac{1-p^{s}\mu}{p^{2n-s-2}(p-1)}.
\end{equation}
Otherwise, there is a level-$n$ canonical subgroup and $\hat{P}$ is an element of it. The valuation is
\begin{equation}\label{HigherCanVal}
v\left(\hat{P}\right)=\frac{1-p^{n-1}\mu}{p^{n-1}(p-1)}.
\end{equation}
Write $\hat{Q}\coloneqq[p]\hat{P}$. In the last case, there are $p$ elements in $[p]^{-1}\hat{Q}$ of valuation $\frac{1-p^{n-1}\mu}{p^{n-1}(p-1)}$ and $p^2-p$ elements of valuation $\frac{\mu}{p^2-p}$.
\end{theorem}


We can state a rougher but easier to digest corollary:

\begin{corollary}\label{Cor: Dividebyp}
If $[p]\hat{P}$ is not in the (possibly non-existent) level-$(n-1)$ canonical subgroup or if $\mu \geq  \frac{1}{p^{n-2}(p+1)}$ , then 
\[v\left(\hat{P}\right)=\frac{1}{p^2}v\left([p]\hat{P}\right).\] 
Otherwise, $v(\hat{P})$ has valuation 
\[\frac{1-p^{n-1}\mu}{p^{n-1}(p-1)} \ \ \ \text{ or } \ \ \ \frac{\mu}{p^2-p},\]
depending, respectively, on whether or not $\hat{P}$ is in the level-$n$ canonical subgroup.
\end{corollary}


If we change perspectives slightly and ask for counts, we obtain
\begin{corollary}\label{Cor: SlopesCounts} 
If $n\leq s+1$, then there are $p^{n-1}(p-1)$ elements of $\hat{E} [=\hspace{-.09 cm }p^n ]$ above $C_{\can}$ with valuation $\frac{1-p^{n-1}\mu}{p^{n-1}(p-1)}$ and, for all $2\leq j\leq n$, there are $p^{2(n-j)}(p^2-p)p^{j-2}(p-1)$ elements with valuation $\frac{\mu}{p^{2(n-j)}(p^2-p)}$.  

If $n> s+1$, then there are $p^{2(n-s-1)}p^s(p-1)$ elements of $\hat{E} [=\hspace{-.09 cm }p^n ]$ above $C_{\can}$ with valuation $\frac{1-p^{s}\mu}{p^{2(n-s-1)}p^s(p-1)}$ and, for all $2\leq j\leq s+1$, there are $p^{2(n-j)}(p^2-p)p^{j-2}(p-1)$ elements with valuation $\frac{\mu}{p^{2(n-j)}(p^2-p)}$.

In either case, there are exactly $p^{2(n-1)}(p^2-p)$ elements of $\hat{E} [=\hspace{-.09 cm }p^n ]$ that are not above $C_{\can}$. Each of these elements has valuation $\frac{\mu}{p^{2(n-1)}(p^2-p)}$.
\end{corollary}

We note that, if one wishes for the valuations of $x$-coordinates of $p^n$-torsion points of $E$, they need only multiply the equations in Theorem \ref{Thm: Main} by $-2$.

\begin{proof}
First, Lemma \ref{constantval} shows 
$\mu>0$. We proceed by induction on $n$. The base case, including Equation \eqref{CanThmn=1}, is established by the discussion immediately preceding Example \ref{121c2}.

For the induction step, suppose we have our result for all $k<n$. 
Define $Q \coloneqq [p]P$, and consider the power series $[p]T-\hat{Q}$ in $K_{\gp}\llbracket T \rrbracket$. If $\alpha$ is a root of $[p]T-\hat{Q}$ in $\overline{K_{\gp}}$, e.g., $\alpha=\hat{P}=\frac{-x(P)}{y(P)}$, then we see $v([p]\alpha)=v (\hat{Q} ).$ Because the height of the formal group is 2, we have 
\begin{equation}\label{eq: powerseries}
[p]T= pf(T)+\pi_{\gp}^{\mu}g \left(T^p \right) + h \left(T^{p^2} \right),
\end{equation} 
where $f,g,$ and $h$ are power series without constant coefficients and with $f'(0),g'(0),h'(0)\in K_{\gp}^*.$ 

Recall $v (\hat{Q} )<1$ by hypothesis. If $v([p]\alpha)\geq v(p\alpha)=1+v(\alpha)$,
then $v(\alpha)\leq v (\hat{Q} )-1<0$. We have a contradiction, since $\alpha\in  (\pi_{\gp} )$. Thus $v([p]\alpha)< v(p\alpha)$ and 
\begin{equation}\label{ValIneq}
v([p]\alpha)\geq \min\left(v\left(\pi_{\gp}^{\mu}\alpha^p\right),v\left(\alpha^{p^2}\right)\right)=\min\left(\mu +pv(\alpha),p^2v(\alpha)\right).
\end{equation}
Using $v(\alpha)<v ( [p^{n-1} ]\alpha )=\frac{1}{p^2-1}$, a short computation with \eqref{ValIneq} yields Equation \eqref{NoCanThm}.

Suppose now that $ [p^m ]\alpha=\hat{S}$ with $0<m<n$ and $v (\hat{S} )\leq \frac{\mu}{p^2-p}$. Suppose further that $\mu+pv(\alpha)\leq p^2v (\alpha )$, then $v(\alpha)\geq \frac{\mu}{p^2-p}\geq v (\hat{S} )$ and we have a contradiction. Thus $v(\alpha)=\frac{1}{p^{2m}}v (\hat{S} )$. This gives us Equation \eqref{CanNotCanThm} and Equation \eqref{HigherNoCanVal} in the case when $m\neq 0$.

We turn our attention to establishing Equations  \eqref{HigherCanValSecond} and \eqref{HigherCanVal}, so we assume $\mu<\frac{p}{p+1}$. Consider $\alpha\in [p]^{-1}\hat{Q}$, where $\hat{Q}\in \hat{E} [=\hspace{-.09 cm }p^{n-1} ]$ has valuation $\frac{1-p^{n-2}\mu}{p^{n-2}(p-1)}$. Suppose $\mu\geq \frac{1}{p^{n-2}(p+1)}$ and, for a contradiction, take the case that $\mu +pv(\alpha)\leq p^2v(\alpha)$. Using $v (\hat{Q} )\leq \frac{1}{p^{n-3}(p^2-1)}$, we obtain
\[\frac{1}{p^{n-1}(p^2-1)}\leq \frac{\mu}{p^2-p}\leq v(\alpha)\leq \frac{v\left(\hat{Q}\right)}{p^2}\leq \frac{1}{p^{n-1}(p^2-1)}. \]
Unless $\mu=\frac{1}{p^{n-2}(p+1)}$, we have a contradiction. In any case, using \eqref{ValIneq}, we obtain Equation \eqref{HigherCanValSecond}. To rephrase, if $\mu\geq \frac{1}{p^{n-2}(p+1)}$, then the valuations of $p^k$-th roots of $\hat{Q}$ are obtained by dividing $v (\hat{Q} )$ by $p^{2k}$ for every $k>0$. 

Suppose $\mu<\frac{1}{p^{n-2}(p+1)}$ and $v (\hat{Q} )=\frac{1-p^{n-2}\mu}{p^{n-2}(p-1)}$. For a contradiction suppose $v(\alpha)=\frac{1}{p^2}v (\hat{Q} )=\frac{1-p^{n-2}\mu}{p^{n}(p-1)}$. Using the upper bound on $\mu$, we compute
\[\mu+pv(\alpha)= \mu +p\left(\frac{1-p^{n-2}\mu}{p^{n}(p-1)}\right)< \frac{1}{p^{n-3}(p^2-1)} < p^2\left(\frac{1-p^{n-2}\mu}{p^{n}(p-1)}\right)=p^2v(\alpha). \]
Hence $\mu+pv(\alpha)<p^2v(\alpha)$ and $v([p]\alpha)=\mu+pv(\alpha)$. This contradicts our hypothesis that $v(\alpha)=\frac{1}{p^2}v (\hat{Q} )$. Therefore $\mu+pv(\alpha)\leq p^2v(\alpha)$ for all $\alpha\in [p]^{-1}\hat{Q}$. 

For at least one $\alpha\in [p]^{-1}\hat{Q}$, we will have $v([p]\alpha)=\mu +pv(\alpha)$. If this was not the case, then $v(\alpha)>\frac{1}{p^2}v (\hat{Q} )$ for every $\alpha\in [p]^{-1}\hat{Q}$. Hence $\sum_{\alpha\in [p]^{-1}\hat{Q}}v(\alpha)>v (\hat{Q} )$. However the elements of $ [p]^{-1}\hat{Q} $ are exactly the roots of $[p]T-\hat{Q}$ in the formal group 
and the valuation of the constant coefficient is $v (\hat{Q} )$. Thus there exists $\alpha \in [p]^{-1}\hat{Q}$ with $v(\alpha)=\frac{1-p^{n-1}\mu}{p^{n-1}(p-1)}$. Considering sums of the form $\alpha+\hat{S}$ with $\hat{S}\in \hat{E}[p]$, we see that there are $p$ elements in $[p]^{-1}\hat{Q}$ with valuation $\frac{1-p^{n-1}\mu}{p^{n-1}(p-1)}$ and $p^2-p$ with valuation $\frac{\mu}{p^2-p}$. Note this also establishes the $m=0$ case of Equation \ref{HigherNoCanVal}. The $p$ elements with larger valuation are the part of the level-$n$ canonical subgroup that is above $\hat{Q}$. \end{proof}

Corollary \ref{Cor: SlopesCounts} is established by a straightforward count. 

\begin{example}\label{9tors}
Consider the elliptic curve $E: y^2+\sqrt[5]{3}xy + y = x^3+\sqrt[5]{3}x^2+2x$ over $\QQ (\sqrt[5]{3} )$. This curve has good supersingular reduction at $ (\sqrt[5]{3} )$, the lone prime above 3. We can compute 
\[\Psi_3 = 3 x^{4} + \left(\sqrt[5]{3^2} + 4 \sqrt[5]{3}\right) x^{3} + \left(3 \sqrt[5]{3} + 12\right) x^{2} +3 x - \sqrt[5]{3} - 4.\] 
Thus $\mu = v (\sqrt[5]{3^2} + 4 \sqrt[5]{3} )= \frac{1}{5}<\frac{1}{3+1}$ and $E$ has a level 2 canonical subgroup at the prime $ (\sqrt[5]{3} )$.

There are 36 distinct $x$-coordinates of points in $E[=\hspace{-.09 cm }9]$. By Theorem \ref{Thm: Main}, there are 27 with $v(x(P))=-\frac{\mu}{27}=-\frac{1}{135}$, there are 6 with $v(x(P))=-\frac{\mu}{3}=-\frac{1}{15}$, and 3 with $v(x(P))=\frac{3\mu-1}{3}=-\frac{2}{15}$. These 3 with smallest valuation correspond to the level-2 canonical subgroup. Thus in $\frac{\Psi_9}{9\Psi_3}$, the coefficient of $x^{33}$ should have valuation $3\mu-1=-\frac{2}{5}$ and the coefficient of $x^{27}$ should have valuation $\mu-1=-\frac{4}{5}$. A computation verifies this.

Shifting to thinking about elements of the formal group, Figure \ref{FiberLevel9} shows the fiber $[p]^{-1}\hat{Q}$ where $\hat{Q}\in C_{\can}$. We see there are three elements of larger valuation $\frac{1}{15}$ and six elements of smaller valuation $\frac{1}{30}$.

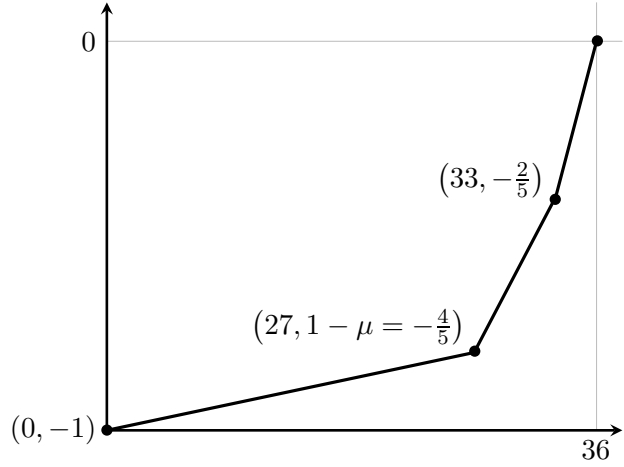
\begin{figure}[h!]
\centering
\begin{tikzpicture}[scale = 1]
\begin{axis}[ 
	xmin=0, xmax=37.9, 
	ymin=0, ymax=1.1,
	xtick={36}, ytick={1},
	xticklabels={},
	yticklabels={0},
	major tick length={0},
	grid=major, 
	line width=1pt, 
	axis lines=center
	] 
	\addplot [very thick, domain=0:27] {.0074074*x};
	\addplot [very thick, domain=27:33] {(x-27)*.066666+.2};
	\addplot [very thick, domain=33:36] {.4+.2+.133333333*(x-33)}; 
\end{axis}
\node [left] at (4.9,1.38) {$\left(27,1-\mu=-\frac{4}{5} \right)$};
\node [below] at (6.52,5.26) [circle,fill,inner sep=1.5pt]{};
\node at (0,0) [circle,fill,inner sep=1.5pt]{};
\node [left] at (0,0) {$\left(0,-1\right)$};
\node [below] at (6.5,0) {$36$};
\node at (5.96,3.07) [circle,fill,inner sep=1.5pt]{};
\node at (4.89,1.05) [circle,fill,inner sep=1.5pt]{};
\node [below] at (5.1,3.7) {$\left(33,-\frac{2}{5}\right)$};
\end{tikzpicture}
\caption{The Newton polygon for the polynomial $\Psi_{9}/\left(9\Psi_3\right)$}
\protect{\label{PsiNewtonPolygon9prim}}
\end{figure}

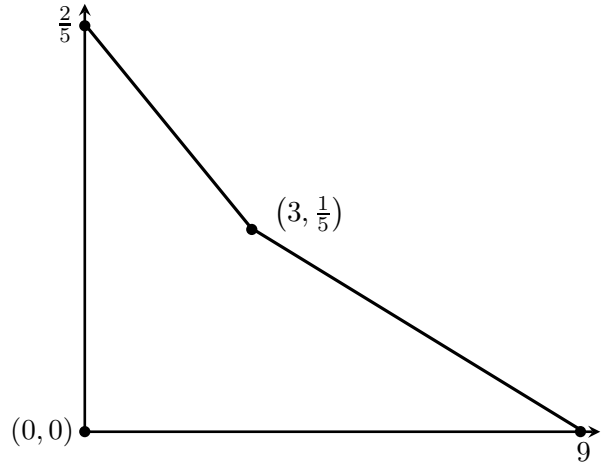
\begin{figure}[h!]
\centering
\begin{tikzpicture}[scale = 1]
\begin{axis}[ 
	xmin=0, xmax=9.3, 
	ymin=0, ymax=.42,
	xtick={9}, ytick={.4},
	xticklabels={9},
	yticklabels={$\frac{2}{5}$},
	major tick length={0},
	line width=1pt, 
	axis lines=center
	] 
	\addplot [very thick, domain=0:3] {.4-.066666*x};
	\addplot [very thick, domain=3:9] {-(x-3)*.0333333+.2}; 
\end{axis}
\node [left] at (3.6,2.9) {$\left(3,\frac{1}{5} \right)$};
\node at (2.22,2.69) [circle,fill,inner sep=1.5pt]{};
\node at (0,0) [circle,fill,inner sep=1.5pt]{};
\node [left] at (0,0) {$\left(0,0\right)$};
\node at (6.59,0) [circle,fill,inner sep=1.5pt]{};
\node at (0,5.4) [circle,fill,inner sep=1.5pt]{};
\end{tikzpicture}
\caption{The fiber above an element of \protect{$C_{\can}$}}
\protect{\label{FiberLevel9}}
\end{figure}

\end{example}

\begin{remark}\label{p=2caveat}
Consider now the case that $p=2$. Our work above with the formal group carries through almost verbatim, but there is a slight caveat for division polynomials. The roots of $\psi_2=2y+a_1x+a_3$ are not the distinct $x$-coordinates of the non-trivial 2-torsion points. Instead we must take 
\[\Psi_2^2=4x^3+\left(4a_2+a_1^2\right)x^2+(4a_4+2a_1a_3)x+4a_6+a_3^2 = 4 \prod_{P\in E[=2]}(x-x(P)).\]
Now $\sum_{P\in E[=2]} v_2(x(P))=-2$ instead of $-1$. However, summing over distinct $x$-coordinates does not introduce a factor of 2 as the $x$-coordinates of the points in $E[=\hspace{-.09 cm }2]$ are all distinct. Similarly to above, we wish to isolate the coefficient of $\frac{1}{4}\Psi_2^2$ that captures the canonical subgroup if it exists. We see
$\eta \coloneqq v_2 (\frac{1}{4} (4a_2+a_1^2 ) )$ is the valuation of the $x$-coordinate of the single point in the canonical subgroup, if there is one. The corresponding element of $\hat{E}$ has valuation 
$-\frac{\eta}{2}$. The constant coefficient of $[2]T/T$ is 2, hence $\mu= 1+ \frac{\eta}{2}=v_2 (4a_2+a_1^2 )/2$. 
\end{remark}


\section{Ramification in Division Fields}\label{RamInDiv}

Theorem \ref{Thm: Slopes} is a quick consequence of our work in Section \ref{Sec: FormalGroup}:
\begin{proof}[Proof of Theorem \ref{Thm: Slopes}] 
Equations \eqref{MinRams} and \eqref{StrongerBound} are implied directly by Theorem \ref{Thm: Main}. 
A computation with maximal values of $\mu$, shows $e_{\gP}>p^{2n}-p^{2n-2}$ when there is a canonical subgroup. 
\end{proof}

Theorem \ref{Thm: Main} can also be used, in conjunction with a lack of ramification, to preclude the existence of a canonical subgroup.

\begin{corollary}\label{RamCan}
No elliptic curve defined over $\QQ$ has a canonical subgroup at any prime. More generally, no elliptic curve that is supersingular at $\pcal$, some unramified prime over $p$, has a canonical subgroup at $\pcal$.
\end{corollary}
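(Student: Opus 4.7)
The plan is to leverage Theorem \ref{Slopes}, which asserts that a canonical subgroup at $\pcal$ exists precisely when $0<\mu<\tfrac{p}{p+1}$, and combine this with the observation that the hypothesis of unramifiedness forces $\mu$ to take values in a discrete set that misses this open interval. Since every rational prime is unramified over itself, the first sentence of the corollary is a special case of the second, so it suffices to handle the general statement.

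For odd $p$, I would choose a Weierstrass model for $E$ over $\Ocal_{L_{\pcal}}$ with good (supersingular) reduction, so that every $a_i$ lies in $\Ocal_{L_{\pcal}}$, and hence every coefficient of $\Psi_p$ does as well. In particular the coefficient $c_{(p^2-p)/2}$ is in $\Ocal_{L_{\pcal}}$. Because $\pcal/p$ is unramified, the normalized valuation $v$ takes values in $\ZZ_{\geq 0}\cup\{\infty\}$ on $\Ocal_{L_{\pcal}}$. Supersingular reduction at $\pcal$ is equivalent, via Deuring's criterion together with Debry's identification recalled in Section \ref{FormalGroup}, to $c_{(p^2-p)/2}\equiv 0\pmod{\pcal}$, so $\mu\geq 1$. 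But $\tfrac{p}{p+1}<1$, so $\mu\geq 1$ rules out the range $\bigl(0,\tfrac{p}{p+1}\bigr)$, and Theorem \ref{Slopes} gives that no canonical subgroup exists.

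For $p=2$ the definition of $\mu$ is slightly different, and this is the only place that requires care. Using Remark \ref{p=2caveat}, $\mu=v(a_1^2+4a_2)/2$ with $a_1,a_2\in\Ocal_{L_{\pcal}}$. A priori $\mu$ could be a half-integer, so one has to rule out $\mu=\tfrac{1}{2}$ in addition to $\mu=0$. If $v(a_1)=0$ then $v(a_1^2)=0$ while $v(4a_2)\geq 2$, forcing $\mu=0$, which contradicts the supersingularity hypothesis. Therefore $v(a_1)\geq 1$, whence $v(a_1^2)\geq 2$ and $v(4a_2)\geq 2$, yielding $v(a_1^2+4a_2)\geq 2$ and $\mu\geq 1>\tfrac{2}{3}$. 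Applying Theorem \ref{Slopes} once more precludes a canonical subgroup.

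The main (modest) obstacle is the $p=2$ bookkeeping, since there the possible values of $\mu$ form a coarser half-integer lattice and one must verify that supersingularity alone kicks out the potentially troublesome value $\mu=\tfrac{1}{2}$. Once this is in hand, the proof is a one-line consequence of Theorem \ref{Slopes}.
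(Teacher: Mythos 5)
Your proof is correct and follows the same essential strategy as the paper: a canonical subgroup requires $0<\mu<\tfrac{p}{p+1}$, and unramifiedness together with integrality of the defining quantity forces $\mu$ out of that interval. The paper's own proof is a two-sentence remark to this effect. Where you add genuine value is in the $p=2$ case. Since $\mu=v(a_1^2+4a_2)/2$, an unramified $\pcal$ makes $\mu$ a priori only a half-integer, so $\mu=\tfrac{1}{2}\in(0,\tfrac{2}{3})$ is not obviously excluded; the paper's line ``the existence of such a valuation requires ramification'' glosses over this. Your observation that $v(a_1^2)=2v(a_1)$ is even and $v(4a_2)\geq 2$, so that $v(a_1)\geq 1$ forces $\mu\geq 1$ while $v(a_1)=0$ forces $\mu=0$ (contradicting supersingularity), closes this loophole cleanly. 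One small stylistic difference: to get $\mu>0$ for odd $p$ you invoke Deuring's criterion together with Debry's identification, whereas the paper derives $\mu>0$ from Lemma \ref{constantval} via a Newton-polygon argument; both routes are valid and rely on facts already present in Section \ref{FormalGroup}.
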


\begin{proof}
Notice $0<\mu<\frac{p}{p+1}$ is required for a canonical subgroup. The existence of such a valuation requires ramification.
\end{proof}
We remark that isogenies of degree coprime to $p$ also preserve the existence or nonexistence of canonical subgroups.



\begin{definition}\label{mintorsionptfield} For a positive integer $m$, define a \emph{minimal $m$-torsion point field} of an elliptic curve over a number field to be an extension of minimal degree such that the elliptic curve has a point of order $m$ over that extension. 
\end{definition}
The following corollary describes ramification in composite level torsion point fields; it will be used in Section \ref{sporadicpoints}.

\begin{corollary}\label{compositeQ}
Let $N\in \ZZ^{>1}$. Factoring $N$ into primes, we write $N=\prod _{i=1}^kp_i^{n_i}$. Let $E$ be an elliptic curve over $\QQ$ that has supersingular reduction at all the $p_i$. If $L$ is a minimal $N$-torsion point field, then any prime above $p_i$ in $L$ has ramification index $p_i^{2n_i}-p_i^{2n_i-2}$ and
\[[L:\QQ]= \prod_{i=1}^k \left(p_i^{2n_i}-p_i^{2n_i-2}\right).\]
\end{corollary}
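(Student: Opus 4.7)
The plan begins by applying Corollary \ref{RamCan}: since $E$ is defined over $\QQ$ and is supersingular at each $p_i$, with $\QQ$ unramified at every rational prime, $E$ has no canonical subgroup at any $p_i$. Consequently the strong divisibility in Theorem \ref{SlopesCor} applies, so for any number field $M$ containing an exact $p_i^{n_i}$-torsion point of $E$ and any prime $\gq$ of $M$ above $p_i$, the quantity $d_i := p_i^{2n_i}-p_i^{2n_i-2}$ divides $e_{\gq}$. Since $L$ is a minimal field of definition for an $N$-torsion point $P$, we have $L = \QQ(P)$ (any proper subfield containing $P$ would violate minimality), and the Chinese Remainder Theorem produces $P = \sum_i P_i$ with $P_i := [N/p_i^{n_i}]P$ of exact order $p_i^{n_i}$, so $L = \QQ(P_1,\ldots,P_k)$ is the compositum $\QQ(P_1)\cdots \QQ(P_k)$ inside $\Qbar$.

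The first step is to show that $[\QQ(P_i):\QQ] = d_i$ for each $i$, and moreover that $\QQ(P_i)$ is totally ramified of degree $d_i$ at a unique prime $\gp_i$ above $p_i$. The lower bound $\ge d_i$ is immediate from the divisibility above. For the upper bound, $x(P_i)$ is a root of $\Psi_{p_i^{n_i}}/\Psi_{p_i^{n_i-1}}$ (or of $\Psi_2^2$ when $p_i = 2$ and $n_i = 1$), a polynomial in $x$ of degree $d_i/2$ (resp.\ $d_i$), and $y(P_i)$ contributes at most a further factor of two from the Weierstrass equation, yielding $[\QQ(P_i):\QQ] \le d_i$. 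Since $[\QQ(P_i):\QQ] = \sum_{\gq\mid p_i}[\QQ(P_i)_\gq : \QQ_{p_i}]$ with each summand $\ge d_i$ (the local ramification forced by $v(\hat P_i) = 1/d_i$ from Theorem \ref{Slopes}), there must be a unique prime $\gp_i$ of $\QQ(P_i)$ above $p_i$, and the completion $\QQ(P_i)_{\gp_i}$ is totally ramified of degree $d_i$ over $\QQ_{p_i}$.

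The decoupling ingredient is that $\QQ(P_i)$ is unramified at $p_j$ for every $j \neq i$. This is because $\QQ(P_i) \subseteq \QQ(E[p_i^{n_i}])$, and the latter field is unramified at $p_j$: $E$ has good reduction at $p_j$ with $p_i \neq p_j$, so $E[p_i^{n_i}]$ is an étale group scheme over $\ZZ_{p_j}$ whose points generate an unramified extension of $\QQ_{p_j}$. I would then compute $[L:\QQ] = \prod_i d_i$ by induction on $k$. At the inductive step, consider the tower $\QQ(P_1)\cdots \QQ(P_{k-1}) \subseteq \QQ(P_1)\cdots \QQ(P_k)$. For any prime of the upper field above $p_k$, the corresponding local extension over $\QQ_{p_k}$ is the compositum of an unramified extension (from $\QQ(P_1)\cdots \QQ(P_{k-1})$) with the totally ramified extension $\QQ(P_k)_{\gp_k}$ of degree $d_k$; these two are linearly disjoint over $\QQ_{p_k}$, forcing the local degree to equal $d_k$. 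Summing local degrees against the global upper bound $[\QQ(P_1)\cdots \QQ(P_k) : \QQ(P_1)\cdots \QQ(P_{k-1})] \le [\QQ(P_k):\QQ] = d_k$ forces equality, completing the induction.

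The ramification conclusion drops out of the same local picture: at $p_i$, the subfield $\QQ(P_i) \subseteq L$ contributes ramification $d_i$, while every other $\QQ(P_j) \subseteq L$ with $j \neq i$ is unramified at $p_i$, so every prime of $L$ above $p_i$ has ramification exactly $d_i$. The main technical obstacle is the inductive local-to-global bookkeeping: one must pair the local-over-local degree lower bound against the global compositum upper bound to pin each step down as $d_k$, and the étale-ness of prime-to-$p_j$ torsion at $p_j$ is the essential structural input that keeps the arguments at distinct primes from interfering with one another.
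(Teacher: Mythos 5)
Your proof is correct and follows essentially the same route as the paper's: no canonical subgroup over $\QQ$ forces ramification index divisible by $d_i$ at each prime above $p_i$, the degree of $\Psi_{p_i^{n_i}}/\Psi_{p_i^{n_i-1}}$ gives the matching upper bound, unramifiedness of prime-to-$p_j$ torsion at $p_j$ decouples the primes, and one iterates across the compositum. One minor slip: with $P_i := [N/p_i^{n_i}]P$ you do not literally have $P = \sum_i P_i$ (the CRT idempotents differ from $N/p_i^{n_i}$ by units mod $p_i^{n_i}$), but $\sum_i P_i$ is a unit multiple of $P$, so the identification $L = \QQ(P_1,\ldots,P_k)$ still holds and the argument is unaffected.
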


Though we have stated Corollary \ref{compositeQ} over $\QQ$, it is valid over any number field in which there is at least one unramified, supersingular prime over each prime dividing $N$.

\begin{proof}
Let $L_i$ be a minimal $p_i^{n_i}$-torsion point field. 
If $p_i\neq p_j$, then $p_j$ is unramified in $L_i$. Observe that the compositum of all the $L_i$ 
is a minimal $N$-torsion point field. 

For $i\neq j$, consider the compositum $L_iL_j$. Since elliptic curves with supersingular reduction over $\QQ$ cannot have a canonical subgroup, the prime $p_i$ has ramification indices divisible by $p_i^{2n_i}-p_i^{2n_i-2}$. Likewise, $p_j$ has ramification indices divisible by $p_j^{2n_j}-p_j^{2n_j-2}$. However, $p_i$ is unramified in $L_j$ and $p_j$ is unramified in $L_i$. One sees that $L_iL_j$ has degree at least $p_j^{2n_j}-p_j^{2n_j-2}$ over $L_i$ so as to attain the necessary ramification. Considering $\Psi_{p_j^{n_j}}$, the degree is also at most $p_j^{2n_j}-p_j^{2n_j-2}$. The equivalent statement holds over $L_j$. Thus $L_iL_j$ has degree $ (p_j^{2n_j}-p_j^{2n_j-2} )\cdot  (p_i^{2n_i}-p_i^{2n_i-2} )$ over $\QQ$. The situation is summarized in Figure \ref{torsfield}. Iterating the above argument, we obtain the result.  
\end{proof}

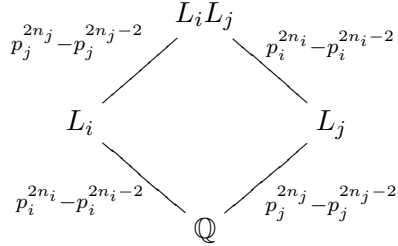
\begin{figure}[h]
\hspace*{.1in}\xymatrix{
 &L_iL_j \ar@{-}[dr]^{p_i^{2n_i}-p_i^{2n_i-2}} \ar@{-}[dl]_{p_j^{2n_j}-p_j^{2n_j-2}}&\\
L_i \ar@{-}[dr]_{p_i^{2n_i}-p_i^{2n_i-2}} & &L_j\ar@{-}[dl]^{p_j^{2n_j}-p_j^{2n_j-2}}\\
&\QQ &}
\caption{Degrees of Minimal Torsion Point Fields}
\protect{\label{torsfield}}
\end{figure}

\section{An Application to Sporadic Points on Modular Curves}\label{sporadicpoints}

Our brief exposition follows \cite{SuthNotes}. 
Let $K$ be a number field. The \emph{$K$-gonality} $\gamma_K(X)$ of a curve $X/K$ is the minimum degree among all dominant morphisms $\phi:X\to \PP^1_K$. Recall, a dominant morphism is a morphism with dense image. We will employ an upper bound on the 
$\QQ$-gonality of the modular curve $X_1(N)$. 
If $x\in X_1(N)$ is a point, define the \emph{degree} of $x$ to be the degree of the residue field at $x$ over $\QQ$. If $x$ corresponds to the data of an elliptic curve $E$ and a point $P\in E[=\hspace{-.09 cm}N]$, then the degree of $x$ is $[\QQ(j(E),\mathfrak{h}(P)):\QQ]$, where $\mathfrak{h}:E\to E/\Aut(E)\cong \PP^1$ is a Weber function for $E$. 

Given a dominant morphism $\phi:X_1(N)\to \PP^1_\QQ$ of degree $d$, one can construct infinitely many points of $X_1(N)$ defined over number fields of degree $d$ by taking preimages. 
Define $\delta(X_1(N))$ to be the smallest positive integer $k$ such that there are infinitely many points of degree $k$ on $X_1(N)$. One sees $\delta(X_1(N))\leq \gamma_\QQ(X_1(N))$. A point on $X_1(N)$ of degree strictly less than $\delta(X_1(N))$ is called a \emph{sporadic point}. Non-cuspidal, sporadic points on $X_1(N)$ correspond to finite families of elliptic curves with a point of order $N$ defined over a number field of ``small" degree. 

Now suppose $N>12$ so that $g(X_1(N))>1$. Using \cite{vanHoeijSmith}, we have the bound
\begin{equation}\label{bound}
\delta(X_1(N))\leq \gamma_\QQ\left(X_1(N)\right)\leq \dfrac{11N^2}{840}.
\end{equation} 


We apply our work in Section \ref{Sec: FormalGroup} to show Theorem \ref{primepower}, which we restate for convenience.

\vspace{.1 in}

\noindent {\bf Theorem \ref{primepower}.} \emph{
Let $L$ be a number field and $E/L$ be an elliptic curve that is supersingular at some prime $\pcal$ of $\Ocal_L$ above $p$. 
Suppose that $E$ does not have a canonical subgroup at $\pcal$, then $j(E)$ does not correspond to a sporadic point on $X_1 (p^n )$ for any $n>0$.
}\\

In what follows, we have to take special care for the CM $j$-invariants 0 and 1728. One should consult \cite{SporadicCM} for a thorough study of sporadic CM points on modular curves. The specific case of the least degree of CM points on $X_1(p^n)$ is treated in Section 6 of \cite{BourdonClark19}. 


\begin{proof}
From ramification, the minimal possible degree over $\QQ$ of an extension where an elliptic curve with $j$-invariant equal to $j(E)$ has a point $P$ of exact order $p^n$ is $\frac{1}{24} (p^{2n}-p^{2n-2} )$, where the factor $\frac{1}{24}$ comes from possibly needing to resolve bad additive reduction. For $p=2$, resolving bad additive reduction may require an extension of degree 24; for $p=3$, an extension of degree 12; and for $p>3$, an extension of degree 6. See \cite{KrausAdditive}.

First we contend with $j(E)\neq 0,1728$. Here 
\[[\QQ(j(E),\mathfrak{h}(P)):\QQ]\geq \frac{1}{48} \left(p^{2n}-p^{2n-2} \right),\]
since $|\Aut(E)|=2$. The result follows after a brief comparison with Equation \eqref{bound}.

When $j(E)=0$, then $|\Aut(E)|=6$ and when $j(E)=1728$, then $|\Aut(E)|=4$. In these cases, if we suppose $p>3$, then the minimal possible degree over $\QQ$ of an extension where $E$ has a point of exact order $p^n$ is $\frac{1}{6} (p^{2n}-p^{2n-2} )$. Now 
\[[\QQ(j(E),\mathfrak{h}(P)):\QQ]\geq \frac{1}{36} \left(p^{2n}-p^{2n-2} \right),\] 
and our previous comparison with Equation \eqref{bound} yields the result.

Take $j(E)=0$ and $p=2$. The elliptic curve $E_2: \ y^2 + y = x^3 $ is defined over $\QQ$, has $j$-invariant 0, and has good supersingular reduction at 2. Hence, if we have another elliptic curve over some number field with $j$-invariant 0 and bad additive reduction at 2, we can make a degree $|\Aut(E)|=6$ extension so that the isomorphism between the curve with bad reduction and $E_2$ is defined and thereby resolve the bad additive reduction. Thus the minimal possible degree over $\QQ$ of an extension where an elliptic curve with $j$-invariant 0 has a point of exact order $2^n$ is $\frac{1}{6} (2^{2n}-2^{2n-2} )$. Therefore,

\[[\QQ(0,\mathfrak{h}(P)):\QQ]\geq \frac{1}{36} \left(2^{2n}-2^{2n-2} \right).\] 

Take $j(E)=1728$ and $p=3$. Similarly to the above argument, the elliptic curve $E_3: \ y^2 = x^3 - x$ has $j$-invariant $1728$ and good supersingular reduction at 3. Thus we can resolve bad additive reduction at 3 by making a degree $|\Aut(E)|=4$ extension to define the isomorphism with $E_3$. We have
\[[\QQ(1728,\mathfrak{h}(P)):\QQ]\geq \frac{1}{16} \left(3^{2n}-3^{2n-2} \right).\] 

When $j(E)=1728$ and $p=2$ and when $j(E)=0$ and $p=3$, the techniques we have been employing run aground. Appealing to \citep[Theorem 3.4]{SporadicCM}, which the authors state is a special case of \cite[Theorem 7.1]{BourdonClark19} and \cite[Theorem 7.2]{BourdonClark18}, we find 
the least degree of a point on $X_1(2^n)$ corresponding to an elliptic curve with CM by $\ZZ[i]$ is $2^{2n-4}$ and 
the least degree of a point on $X_1(3^n)$ corresponding to an elliptic curve with CM by $\ZZ [\frac{1+\sqrt{-3}}{2} ]$ is $3^{2n-3}$. Respectively, these results apply to $j$-invariants 1728 and 0. Comparing with Equation \eqref{bound} finishes the proof.
\end{proof}

We can also restrict sporadic points for composite level: 
 

\begin{theorem}\label{modapp}
Let $N>12$ be a positive integer\footnote{For $1\leq N\leq 12$, the work summarized in Section \ref{previouswork} shows $X_1(N)$ has no sporadic points.}  not divisible by 6 and write $N=\prod_{i=1}^kp_i^{n_i}$ for the prime factorization. Suppose that $E/\QQ$ has good supersingular reduction at each $p_i$, then $j(E)$ does not correspond to a sporadic point on $X_1(N)$.
\end{theorem}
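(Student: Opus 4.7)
The plan is to lower-bound the degree of a non-cuspidal point $x\in X_1(N)$ corresponding to a pair $(E,P)$ with $P$ of exact order $N$, and to compare this bound with the upper bound $\delta(X_1(N))\leq 11N^2/840$ from \eqref{bound}. Since $E$ is defined over $\QQ$, Corollary \ref{RamCan} precludes a canonical subgroup at any $p_i$, so the hypotheses of Corollary \ref{compositeQ} are met and any minimal $N$-torsion point field $L$ of $E$ satisfies
\[
[L:\QQ]\;=\;\prod_{i=1}^{k}\bigl(p_i^{2n_i}-p_i^{2n_i-2}\bigr)\;=\;N^{2}\prod_{i=1}^{k}\bigl(1-p_i^{-2}\bigr).
\]

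Next, because $j(E)\in\QQ$, the degree of $x$ equals $[\QQ(\mathfrak{h}(P)):\QQ]$. The Weber function identifies $\Aut(E)$-orbits, so $[\QQ(P):\QQ(\mathfrak{h}(P))]\leq|\Aut(E)|$, and combined with the minimality of $L$ this yields
\[
\deg(x)\;\geq\;\frac{[\QQ(P):\QQ]}{|\Aut(E)|}\;\geq\;\frac{[L:\QQ]}{|\Aut(E)|}\;=\;\frac{N^{2}\prod_{i}(1-p_i^{-2})}{|\Aut(E)|}.
\]
It therefore suffices, in view of \eqref{bound}, to verify the numerical inequality $\prod_i(1-p_i^{-2})\geq \frac{11\,|\Aut(E)|}{840}$.

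To close the argument I would observe that $\prod_i(1-p_i^{-2})$ is a finite subproduct of the Euler product $\prod_{p\text{ prime}}(1-p^{-2})=6/\pi^{2}>3/5$; since each factor is less than $1$, truncating to the primes dividing $N$ only enlarges the product, so $\prod_i(1-p_i^{-2})>3/5$. Over $\QQ$ we have $|\Aut(E)|\in\{2,4,6\}$, producing thresholds $\tfrac{11}{420}$, $\tfrac{11}{210}$, and $\tfrac{11}{140}$, all well below $3/5$, so the inequality is comfortably satisfied in every case.

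The key technical input is Corollary \ref{compositeQ}, which rests on the ramification analysis of Section \ref{FormalGroup}; as Remark \ref{ReciprocalEisenstein} indicates, for this application only the weaker reciprocal-Eisenstein bound on each $L_i$ is required, so the main obstacle was really the composite-level degree formula rather than the fine analysis of the formal group. A secondary point worth checking is that the CM $j$-invariants $0$ and $1728$ interact consistently with the supersingular hypothesis — $j(E)=0$ forces $3\nmid N$ and $j(E)=1728$ forces $2\nmid N$ (both compatible with $6\nmid N$) — so the bound above with $|\Aut(E)|=6$ or $4$ remains valid in these cases.
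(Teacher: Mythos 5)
Your proof is correct and takes a tighter route than the paper does at one key step. The paper inserts \emph{two} factors of $\tfrac16$ into the lower bound (obtaining the constant $\tfrac1{36}$): one because the elliptic curve attached to the point $x$ may be a twist of $E$ with bad additive reduction that must be resolved before the ramification bound applies, and one to pass from a field of definition of that pair to the residue field $\QQ(j(E),\mathfrak h(P))$ via the Weber function. You instead work from the outset with the Weber function on the fixed $\QQ$-rational model $E$, using that $\kappa(x)=\QQ(\mathfrak h(P))$ is insensitive to twisting; then $\QQ(P)$ is literally an $N$-torsion point field for $E$, so Corollary~\ref{compositeQ} applies directly and a single factor of $\tfrac1{|\Aut(E)|}$ suffices. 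This gives the sharper bound $\deg(x)\geq[L:\QQ]/|\Aut(E)|$ instead of $[L:\QQ]/36$, which makes the final numerical comparison even more comfortable. Both arguments reduce to the same Euler-product estimate $\prod_i(1-p_i^{-2})>6/\pi^2>3/5$, and both close; yours is cleaner. Your closing remark about $j=0,1728$ is also correct, and in fact stronger than stated: a $j=0$ curve over $\QQ$ can never have good reduction at $3$ (its minimal discriminant is always divisible by $3$), and a $j=1728$ curve over $\QQ$ can never have good reduction at $2$, so under the good-supersingular hypothesis these CM $j$-invariants are automatically incompatible with $3\mid N$ and $2\mid N$ respectively. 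One presentational caution: you use $E$ both for the fixed $\QQ$-rational curve and for the curve in the pair attached to $x$; since the latter is only $\overline\QQ$-isomorphic to the former, it would be clearer to give them different names and say explicitly that you transport $P$ to $E(\overline\QQ)$ via the isomorphism before invoking minimality of $L$.
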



\begin{proof}
Corollary \ref{compositeQ} shows the degree of a minimal extension over which $E$ has a point of exact order $N$ is $\prod_{i=1}^k (p_i^{2n_i}-p_i^{2n_i-2} )$. If $E'$ is an elliptic curve over a number field $K$ with $j(E')=j(E)$ and $E'(K)[=\hspace{-.09 cm N}]\neq \emptyset$, then there is an extension of degree at most six over which $E'\cong E$. As $E$ has a point of exact order $N$ over this extension, we see $[K:\QQ]\geq \frac{1}{6}\prod_{i=1}^k (p_i^{2n_i}-p_i^{2n_i-2} )$.

Hence
\begin{equation}\label{twoterms}
[\QQ(\mathfrak{h}(P),j(E)):\QQ]\geq \frac{1}{6}\cdot \frac{1}{6}\prod_{i=1}^k\left(p_i^{2n_i}-p_i^{2n_i-2}\right)\geq \frac{1}{36}N^2-\frac{1}{36}\sum\limits_i^k \dfrac{N^2}{p_i^2},
\end{equation}
where we simply take the two leading terms for the last inequality. 

We wish to show $[\QQ(\mathfrak{h}(P),j(E)):\QQ]\geq \frac{11N^2}{840}$. Hence, via \eqref{twoterms}, our problem is implied by $\frac{1}{36}N^2-\frac{1}{36}\sum_{i=1}^k \frac{N^2}{p_i^2}\geq\frac{11N^2}{840}$.
Thus, it is enough to show 
\begin{equation}\label{primezetaestimate}
\frac{37}{2520}\geq \frac{70}{2520}\sum_{i=1}^k \frac{1}{p_i^2}.
\end{equation}
The prime zeta function evaluated at 2 is approximately $.45225$; see \href{https://oeis.org/A085548}{https://oeis.org/A085548}. 
Equation \ref{primezetaestimate} follows.
\end{proof}
 
\begin{example}\label{nonsup6}
As an example of how one might deal with non-supersingular primes, suppose $E/\QQ$ does not have supersingular reduction at any of the primes above 2 or 3. Note $j$-invariants 0 and 1728 are supersingular at 2 and 3. Take $N=6N_0$, where $\gcd (6,N_0 )=1$ and $N_0>1$, and suppose $E$ has good supersingular reduction at all the primes dividing $N_0$. The degree of the smallest extension of $K$ over which $E$ has a point $P$ of order $N$ is at least $N_0^2-\sum_{i=1}^k \frac{N_0^2}{p_i^2}$. Hence, accounting for quadratic twists with bad additive reduction, we wish to show that $ \frac{1}{2}N_0^2-\frac{1}{2}\sum_{i=1}^k \frac{N_0^2}{p_i^2}\geq \frac{11N^2}{840}=\frac{11N_0^2}{140}$.
This amounts to showing $\frac{59}{70}\approx 0.84286\geq \sum_{i=1}^k \frac{1}{p_i^2}$. Using the a rough estimate 
with the prime zeta function as above, the inequality is clear. 

Though we have shown that sporadic points on $X_1 (6N_0 )$ do not correspond to rational elliptic curves with good supersingular reduction at the prime divisors of $N_0$, the point of this example is to show that being supersingular at some primes dividing the level is an obstacle to corresponding to a sporadic point.  
\end{example}

In Example \ref{nonsup6}, we needed a lack of ramification at at least one of the primes above each of the primes dividing $N_0$ in order to preclude a canonical subgroup. The following example shows that, with or without a (level-1) canonical subgroup, it is difficult for supersingular elliptic curves to correspond to sporadic points. 

\begin{example}\label{quadratic}
Let $p\leq 17$ be a prime and let $E$ be an elliptic curve over a quadratic field that has good supersingular reduction (with or without a canonical subgroup) at a prime above $p$. 
Combining Theorem \ref{primepower} and a computation with $\mu=\frac{1}{2}$ similar to what we have done above, we can see that $j(E)$ does not correspond to a sporadic point on $X_1 (p^n )$ for any $n>0$. 

To elaborate on the computation, Theorem \ref{primepower} covers the case where $E$ does not have a canonical subgroup. Thus we must deal with the case where $\mu=\frac{1}{2}$. Here we have a canonical subgroup, but we have higher level canonical subgroups. 
In particular, if $\hat{P}$ is a $p^n$-torsion element with $n>1$, then the valuation of $\hat{P}$ is obtained by dividing the valuation of $[p]\hat{P}$ by $p^2$. The valuation of any $\hat{P}\in C_{\can}$ is $\frac{1}{2(p-1)}$, and the computation reduces to showing $\frac{1}{4}\cdot \frac{1}{2}\cdot 2(p-1)$ is greater than $\frac{11 p^2}{840}$. Here we have the factor of $\frac{1}{4}$ for potentially needing to resolve bad additive reduction and the factor of $\frac{1}{2}$ for the Weber function quotienting by $\Aut(E)$. We are making use of the fact that if $E'/L$ is another elliptic curve with $j(E')=j(E)$, then we can make a quadratic extension to define $E$ and another quadratic extension so $E'\cong E$ in order to resolve bad additive reduction. 
\end{example}

\bibliography{BibRamSpor}
\bibliographystyle{alpha} 

\end{document}